\documentclass[12pt]{article}

\usepackage[english]{babel}
\usepackage{cmap}				
\usepackage{amsmath}
\usepackage{amsfonts,amssymb,mathrsfs,amscd,amsmath,comment}
\usepackage{url}
\newtheorem{theorem}{Theorem}%

\newtheorem{lemma}[theorem]{Lemma}

\newtheorem{corollary}[theorem]{Corollary}
\newtheorem{definition}{Definition}
\newtheorem{defn}[definition]{Definition}
\newtheorem{definition-proposition}[theorem]{Definition-Proposition}
\newtheorem{proposition}[theorem]{Proposition}

\newcommand{\R}{\mathbb{R}}
\newcommand{\rp}{\mathbb{RP}}
\newcommand{\Z}{\mathbb{Z}}

\newcommand{\Nn}{\mathbb{N}}

\newcommand{\PP}{\mathbb{P}}

\newcommand{\conj}{\operatorname{conj}}
\newcommand{\Sep}{\operatorname{Sep}}
\newcommand{\qed}{\square}
\newcommand{\sign}{\operatorname{sign}}

\def\C{\mathbb C}
\def\hC{\widehat\C}
\def\hR{\widehat\R}
\let\wt=\widetilde
\begin{document}

\date{}

\title{On elements of degree $g$ in the separating semigroup of a real curve of genus $g$.}
\author{G.\,B. Mikhalkin, S.\,Yu. Orevkov}

\maketitle

\begin{abstract}
Let $C$ be a separating real algebraic curve of genus $g$ whose real locus consists of
$r$ components and let $S=\Sep(C)$ be its separating semigroup. An element $\delta\in S$
is called {\em extendable} if $\delta+\Nn_0^r\subset S$, where $\Nn_0=\{n\in\Z\mid n\ge 0\}$.
We prove that an element of $S$ is extendable if it is realizable by a  separating 
mobile divisor
of degree $g$ and projective rank 1.
\end{abstract}




\section{Introduction}
By a real curve $(C,\conj)$ we mean a connected closed Riemann surface of genus $g$
endowed with an involution of complex conjugation $\conj$, which sends multiplication
of tangent vectors by $i$ to multiplication by $-i$. 
The fixed point set $\R C$ of such an involution is a collection of $r\le g+1$ smooth circles.

Real curves are divided into two types: those for which the set $C\setminus\R C$ is disconnected are of type I,
and the others are of type II. The Riemann surface $C$ is canonically oriented by its complex orientation. 
For curves of type I (also called {\em separating}), the real locus $\R C$ inherits two opposite
orientations from the two components of the complement $C\setminus\R C$. Thus, among the $2^r$ possible orientations
of the multicomponent curve $\R C$, there are two preferred orientations.
Each of these two opposite orientations
is called a {\em complex orientation} of $\R C$.

A rational function $f:C\to\hC:=\C\cup\{\infty\}$ is {\em real} if it is equivariant
with respect to $\conj$, i.e., if $f\circ\conj=\conj\circ f$. It is called {\em totally real}
if $f^{-1}(\hR)=\R C$, where $\hR=\R\cup\{\infty\}$. In this case 
the restriction $f|_{\R C}: \R C\to\hR$ is an oriented covering for
the standard orientation of $\hR$ and one of the complex orientations of $\R C$.
Such a covering has a {\em polydegree} $\delta=(d_1,\dots,d_r)\in\Nn^r$,
composed of the degrees $d_j$ of the restrictions of the covering to the components of $\R C$
(which we assume to be numbered).

Clearly, if $f$ is totally real, then $C$ is separating. Moreover, L.~Ahlfors \cite{refAhlfors}
showed that on a separating curve one can always find a totally real rational function.
Therefore, totally real functions, considered as morphisms $C\to\PP^1$,
are called in \cite{refKS} {\em separating morphisms}.
A divisor consisting only of real points is called {\em totally real},
and the divisor of zeros of a totally real rational function is called {\em separating}.
It is easy to see that a separating divisor cannot contain multiple points,
therefore, with some abuse of language, we will not always
distinguish between separating divisors and their supports.

M.~Kummer and K.~Shaw \cite{refKS} considered the set 
$$
    \Sep(C) = \{d(f)\mid\,f:C\to\hC\text{ is totally real}\} \subset\Z^r
$$
consisting of all polydegrees of separating divisors 
on a real curve $C$, and showed that it is closed under addition.
This subset is called the {\em separating semigroup} of the real curve $C$.
The separating semigroups of $M$-curves, hyperelliptic curves, curves of genus $\le 4$, and plane quintics are computed in
\cite{refKS,refMO,refOr19,refOr25}.

\begin{defn}
An element $\delta\in \Sep(C)$ is called {\em extendable} if 
$$
      \delta+\Nn_0^r\subset \Sep(C), \qquad\text{where}\quad  \Nn_0 = \{n\in\Z\mid n\ge 0\}.
$$
\end{defn}

Kummer and Shaw proved that if a separating divisor $D$ is nonspecial,
then its polydegree is an extendable element in $\Sep(C)$.%
\footnote{At first glance one could think that the sufficient extendibility condition in \cite[Corollary 3.4]{refKS}
could be applied to some special divisors. However, this is not so, as one can see in the proof of this corollary.}
The main result of our paper gives a sufficient condition for the extendibility of elements 
corresponding to the special case, i.e., the case $h^1((f)_0)>0$
(note that a separating divisor of degree $g$ is always special).
 
\begin{theorem}\label{mainthm}
Let $C$ be a separating real algebraic curve of genus $g$, and
$f:C\to\hC$ be a totally real rational function of degree $g$, 
such that its divisor of zeros $(f)_0$ has projective rank~$1$, i.e., $h^0((f)_0)=2$.
Then for all, except finitely many, points $y\in\hR$, the preimage $f^{-1}(y)$ defines a divisor $D=(f)_y$ with the following property.

For a sufficiently small arc $A\subset\R C$ centered at any point $p\in f^{-1}(y)$, and for any choice of points $p',p''\in A$ lying in different components of $A\setminus\{p\}$, 
the divisor $D-p+p'+p''$ is a separating nonspecial divisor of degree $g+1$. 
\end{theorem}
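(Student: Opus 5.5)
The plan is to handle nonspeciality by a Riemann--Roch count, and separatedness by a degeneration argument as $p',p''\to p$. Throughout, $D=(f)_y$ for a real $y$, so $D\sim(f)_0$, $\deg D=g$ and $h^0(D)=2$.

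\emph{Step 1: nonspeciality.} By Riemann--Roch, $h^1(D)=h^0(D)-g+g-1=1$, so there is a unique canonical divisor $K_0\ge D$. Since $f$ is nonconstant, $p$ is not a base point of $|D|$, hence $h^0(D-p)=1$ and Riemann--Roch gives $h^1(D-p)=1$. So $K_0$ is also the unique canonical divisor $\ge D-p$. Put $E=D-p+p'+p''$. Any canonical divisor $\ge E$ is $\ge D-p$, hence equals $K_0$; so $E$ is special iff $K_0\ge D-p+p'+p''$. Choose $A$ so small that $A$ meets $\operatorname{supp}K_0$ only at $p$. Then $p',p''\notin\operatorname{supp}K_0$, so $E$ is nonspecial with $h^0(E)=2$. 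This works for every $y$.

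\emph{Step 2: the limiting pencil.} By the same count $h^0(D+p)=2$, so $L(D+p)=L(D)=\langle 1,1/(f-y)\rangle$ and $p$ is a base point of $|D+p|$. As $p',p''\to p$, the pencil $|E|$ therefore degenerates to $|D|+p$. Since $h^0(E)=2$, I choose in $L(E)$ the function $F$ with $(F)_0=E$. I normalize it so that, on compact sets away from $p$, $F\to f-y$ up to a positive real constant (or up to a Möbius change; see Step~4 for what is actually needed). I would justify this convergence by writing $L(E)$ via Riemann--Roch residues against the one-dimensional space $H^0(K-D+p)=\langle\omega_0\rangle$, where $(\omega_0)=K_0$. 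The polar divisor of $F$ must then be $(f-y)_\infty+c$ for one extra point $c=c(p',p'')$ tending to $p$. I would check this by degree counting: $\deg F=g+1$, and the part away from $p$ has degree $g$ in the limit.

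\emph{Step 3: local model and total reality.} In a real local coordinate $z$ centered at $p$, after rescaling by the size of $A$, I expect $F$ to behave like $\lambda(z-a)(z-b)/(z-c)$ with $a<0<b$ real. Here $a,b$ are the rescaled positions of $p',p''$, and $c$ is the rescaled new pole. This function is totally real of degree~2 precisely when the real pole $c$ interlaces the zeros, i.e.\ $a<c<b$. Assume that holds. Away from $p$, $F$ is a small perturbation of a totally real function, and $f|_{\R C}$ has no critical points because $f$ is totally real. So a real value $t$ has $g-1$ real preimages from the part away from $p$ and $2$ real preimages from the bubble at $p$. That is $g+1=\deg F$ real preimages, so $F^{-1}(\hR)=\R C$ and $E$ is separating.

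\emph{Step 4: the main obstacle.} The real difficulty is to prove that $c$ is real and lies between $p'$ and $p''$, uniformly for all $p',p''$ on opposite sides of $p$ in a small arc. I would first compute $c$ to leading order: $F$ must satisfy the one residue condition $\sum\operatorname{Res}(F\omega_0)=0$. Expanding this in $z$ should express $c$ as a weighted combination of $a$ and $b$, with weights determined by the first-order behaviour of $f-y$ and of $\omega_0$ at $p$. Positivity of these weights should follow from the complex orientation: $f$ is increasing along $\R C$ with respect to a complex orientation. Positivity gives $c\in(a,b)$. The finitely many excluded $y$ are those for which this leading term degenerates, namely when $p$ (equivalently, some point of $f^{-1}(y)$) lies in the residual divisor $K_0-D$, or $\omega_0$ vanishes at $p$. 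As $y$ varies, the divisors $K_0(y)$ form an algebraic family, so this happens only for finitely many $y$. I expect pinning down the sign of the weights, and hence the interlacing, to be the delicate part of the argument.
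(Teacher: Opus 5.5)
Your Step~1 is correct and is exactly the paper's nonspeciality argument. The separating part, however, is not proved. The claim you isolate in Step~4 (that $c$ is real and lies between $p'$ and $p''$) is where the content of the theorem sits, and you leave it open.

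The route you sketch for it also points the wrong way. You conflate $F$ with $1/F$: elements of $L(E)$ have poles on $E$, so it is $h=1/F$ that lies in $L(E)$. With $(F)_0=E$, the identity $\sum\operatorname{Res}(F\omega_0)=0$ involves the residues of $F$ at all its poles, including the $g$ poles near $(f-y)_\infty$ where $\omega_0$ has no reason to vanish. So it cannot pin down $c$.

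The useful relation is the one for $h\omega_0$. Since $\omega_0$ vanishes on $D-p$, $h\omega_0$ has poles only at $p',p''$, so $\operatorname{Res}_{p'}(h\omega_0)+\operatorname{Res}_{p''}(h\omega_0)=0$. Because $p$ is a \emph{simple} zero of $\omega_0$ and $p',p''$ lie on opposite sides of it, $\omega_0$ takes opposite signs at $p'$ and $p''$. Hence the residues of $h$ there have the same sign, i.e.\ $\sign_{dF}(p')=\sign_{dF}(p'')$. This is Proposition~\ref{opp-or} applied to $\omega_0$, and in your local model it is exactly $a<c<b$. So the positivity you want comes from the sign change of $\omega_0$ at $p$, not from the complex orientation or from total reality of $f$.

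Total reality of $f$ is needed for a different comparison: between $p'$ (or $p''$) and the points $q\in\Supp(D)\setminus\{p\}$. The paper handles this with $\omega_0+\varepsilon\omega$, where $\omega$ is the real form of Proposition~\ref{mob1} for the pair $p,q$. Its values at $p$ and $q$ have opposite signs because $D$ is separating, and $\varepsilon$ is chosen so that the combination vanishes at $p''$. Propositions~\ref{or3} and~\ref{or4} then finish.

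Two further steps are missing.
\begin{itemize}
\item You take $F$ with $(F)_0=E$ and $\deg F=g+1$, which presupposes that $|E|$ is base-point free; you never check this. A base point $q$ would drop $q$ from the zero set and lower the degree. The paper proves mobility separately.
  \begin{itemize}
  \item Base points at $p'$ or $p''$ are excluded because every form vanishing on $D-p$ is a multiple of $\omega_0$, which does not vanish at $p'$ or $p''$.
  \item Suppose $q\in\Supp(D)\setminus\{p\}$ were a base point for $p',p''$ arbitrarily close to $p$. In the limit this gives a nonzero form vanishing on $D-q$ with a double zero at $p$. Such a form is again a multiple of $\omega_0$ (as $h^1(D-q)=1$), which is impossible for non-exceptional $y$.
  \end{itemize}
\item The preimage count in Step~3 needs control of $F$ uniformly on $C$ and for all $t\in\hR$, including the neck between the bubble and the outer region. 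You only assert it. Proposition~\ref{or3} makes this unnecessary, since only the signs of $dF$ at the zeros of $F$ matter.
\end{itemize}

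Smaller slips:
\begin{itemize}
\item The polar divisor of $F$ cannot be exactly $(f-y)_\infty+c$, since that would force $p'+p''\sim p+c$.
\item $h^0(D+p)=2$ holds only when $p\notin\Supp(K_0-D)$.
\item ``$\omega_0$ vanishes at $p$'' is not an exclusion condition, since it always holds. The residual divisor $K_0-D$ does not depend on $y$, and the exceptional $y$ are just the values of $f$ on its support.
\end{itemize}
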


\begin{corollary}
Under the assumptions of Theorem \ref{mainthm}, the polydegree of the divisor $D$ is an extendable element of the semigroup $\Sep(C)$.
\end{corollary}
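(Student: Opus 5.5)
The plan is to prove nonspeciality with Riemann--Roch and the unique differential attached to the pencil, and then to prove that $D'=D-p+p'+p''$ is separating by building an explicit function with pole divisor $D'$ and counting its real preimages.

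\textbf{Setup and exceptional values.} Since $\deg (f)_0=g$ and $h^0((f)_0)=2$, Riemann--Roch gives $h^1((f)_0)=1$. All fibres $D_y=(f)_y$ are linearly equivalent, so $|K-D_y|$ consists of a single effective divisor $E$ of degree $g-2$, independent of $y$. Hence there is a holomorphic differential $\omega$, unique up to a real scalar, with $(\omega)=D_y+E$ for every $y$; it is real, since $\omega$ and $\conj^*\bar\omega$ have the same divisor and therefore differ by a constant factor. We exclude from $\hR$ the finitely many values $f(\operatorname{supp}E)$ and the branch values of $f$. For the remaining real $y$:
\begin{itemize}
\item $D=D_y$ is a reduced totally real divisor of $g$ points;
\item $\omega$ has a simple zero at each $p\in D$;
\item $\omega$ has no zeros near $p$ other than $p$ itself.
\end{itemize}

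\textbf{Nonspeciality.} The pencil $|D|$ is base point free, because its members are the fibres of $f$. Hence $h^0(D-p)=1$, and Riemann--Roch gives $h^1(D-p)=1$. So the only holomorphic differentials vanishing on $D-p$ are the multiples of $\omega$. A differential vanishing on $D'$ also vanishes on $D-p$, so it would be a nonzero multiple of $\omega$ vanishing at $p'$ and $p''$. Take the arc $A$ so small that $p$ is the only zero of $\omega$ on $A$. Then $\omega(p')\neq0$, so $h^1(D')=0$, $D'$ is nonspecial, and $h^0(D')=2$.

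\textbf{Separating property (main obstacle).} Let $h$ be a nonconstant element of $L(D')$, which exists since $h^0(D')=2$. It can be chosen real, because $D'$ is conj-invariant. Its pole divisor is exactly $D'$, because $h^0(D'-q)=1$ for every $q\in D'$ by nonspeciality. So $h$ has degree $g+1$ and simple real poles at the $g+1$ points of $D'$.

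We fix a local coordinate on each component of $\R C$ compatible with a complex orientation, and show that all residues of $h$ at the points of $D'$ have the same sign. Granting this, on each oval $h$ runs from $-\infty$ to $+\infty$ between consecutive poles. This produces at least $g+1$ real preimages of every real value, hence exactly $g+1$. So $h^{-1}(\hR)=\R C$, $h$ is totally real, and after composing with a real Möbius map $D'$ is its divisor of zeros.

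For the sign claim, we use the residue theorem for $h\,\eta$ with $\eta$ ranging over holomorphic differentials. The residue vector $(r_q)_{q\in D'}$ must satisfy $\sum_q r_q\,\eta(q)=0$ for all $\eta$, and these conditions cut out a one-dimensional space. We compare this with the residue vector of $1/(f-y)$. For that function all residues have one sign, since $f$ is totally real and its restriction to each oval is an oriented covering, and its residue vector is orthogonal to $\eta|_D$ for all $\eta$. As $p',p''\to p$ from opposite sides, we expect the residues of $h$ at $D-p$ to converge (after normalization) to those of $1/(f-y)$. The residues $r_{p'},r_{p''}$ should each be large with sign governed by the sign change of $\omega$ across its simple zero at $p$, which is what requires $p',p''$ on different sides of $p$.

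To make this rigorous, I would first try to write $h$ explicitly as $1/(f-y)+c\,u$, where $u$ is a local correction with poles at $p'$ and $p''$. Equivalently, one can solve the linear residue system to leading order in the distances $|p'-p|$ and $|p''-p|$, using that $\omega$ vanishes simply at $p$ while $\eta(p)\neq0$ for some other differential $\eta$. The verification that these two residues come out with the correct common sign is the step I expect to be hardest.
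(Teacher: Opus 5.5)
\textbf{Missing step: from the theorem to the corollary.} Your proposal stops at (a version of) Theorem~\ref{mainthm} and never derives extendability, which is what this corollary asserts. The missing step is short but necessary. By the Kummer--Shaw result quoted in the introduction, a nonspecial separating divisor has extendable polydegree. Here $d(D-p+p'+p'')=\delta+e_j$, where $X_j$ is the component of $\R C$ containing $p$. You need this for every $j$. It holds because $f$ is totally real, so each $f|_{X_j}\colon X_j\to\hR$ is a covering of positive degree and every component contains a point of $f^{-1}(y)$. Together with $\delta\in\Sep(C)$, this gives
$\delta+\Nn_0^r=\{\delta\}\cup\bigcup_j(\delta+e_j+\Nn_0^r)\subset\Sep(C)$.

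\textbf{Gaps in your argument for the theorem.} There are two. First, ``the pole divisor of $h$ is exactly $D'$ because $h^0(D'-q)=1$ by nonspeciality'' is wrong. Riemann--Roch gives $h^0(D'-q)=1+h^1(D'-q)$, and the degree-$g$ divisor $D'-q$ can be special even though $D'$ is not. The paper needs a separate argument here. For $q\notin\{p',p''\}$, forms vanishing on $D'-q$ for $p',p''$ arbitrarily close to $p$ would converge to a form vanishing on $D-q$ with a double zero at $p$. That form is a multiple of your $\omega$, contradicting the simple zero at $p$.

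Second, the sign claim, which is the heart of the matter, is only conjectured, and your heuristics are off. $h$ cannot be $1/(f-y)+cu$ with $u$ having poles only at $p',p''$, since $h$ would then have a pole at $p$. Also, $r_{p'},r_{p''}$ stay bounded rather than becoming large.

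\textbf{How to close the sign gap.} Use the exact relation $r_{p'}\omega(p')+r_{p''}\omega(p'')=0$, which is the residue theorem for $h\omega$, since $\omega$ vanishes on $D-p$. In an oriented coordinate with $p'=-a$, $p''=b$, $a,b>0$, this forces $r_{p'},r_{p''}$ to have the same sign, with ratio $(b/a)(1+o(1))$. Hence $|r_{p'}|,|r_{p''}|\le|R|$ with $R=r_{p'}+r_{p''}$. The conditions from a complement of $\omega$ in $H^0(K)$ then force the normalized vector $((r_q)_{q\in D-p},R)$ to converge to a multiple of $(1/f'(q))_{q\in D}$. That vector spans the solution space of the limiting system. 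Hence all $g+1$ residues are nonzero and of one sign, which also repairs the first gap.

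This is the mechanism of Proposition~\ref{opp-or}. The paper applies it pairwise: to $p'$ and one $q\in D-p$ at a time, using a small perturbation of your $\omega$ that vanishes at $p''$. You would be applying it to the whole residue vector at once.
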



\section{Divisors of projective rank 1}
The following statement is an adaptation of an observation
made in \cite{refOrGAFA} for curves embedded in surfaces,
to the case of abstract real curves. 
Let $C$ be a separating real curve,
and $f:C\to\hC$ be a real rational function on it.
Let $\omega$ be a real holomorphic form on the curve $C$, and $(\omega)_0$ be its divisor of zeros.

\begin{proposition}\label{opp-or}
Suppose that $p',p''\in f^{-1}(0)$ are two distinct points such that the divisor $(\omega)_0-(f)_0$ is not effective,
but the divisor $(\omega)_0-(f)_0+p'+p''$ is effective,
i.e., at one of the points $p'$, $p''$
the order of zero of 
$f$ is one greater than the order of zero of 
$\omega$.
Then the divisors $(\omega)_0-(f)_0+p'$ and $(\omega)_0-(f)_0+p''$ are not effective,
i.e., at each of the points $p'$, $p''$
the order of zero of 
$f$ is one greater than the order of zero of 
$\omega$.
Moreover, the orientations on $\R C$ defined by the forms $\omega$ and $df$ coincide in a neighborhood of one of the points $p',p''$,
and are opposite in a neighborhood of the other point.
\end{proposition}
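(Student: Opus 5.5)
The plan is to apply the residue theorem to the real meromorphic differential $\eta=\omega/f$ on $C$. First I will locate the poles of $\eta$. At a point $q$ we have $\operatorname{ord}_q\eta=\operatorname{ord}_q\omega-\operatorname{ord}_q f$. This is negative only if $q$ is a zero of $f$ with $\operatorname{ord}_q f>\operatorname{ord}_q\omega$, i.e. only where the divisor $(\omega)_0-(f)_0$ has a negative coefficient. At poles of $f$ the order only increases.

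Since $(\omega)_0-(f)_0+p'+p''$ is effective and $p'\ne p''$, the only negative coefficients of $(\omega)_0-(f)_0$ can occur at $p'$ and $p''$, and each is at least $-1$. So $\eta$ has at most simple poles, located only among $p',p''$. Since $(\omega)_0-(f)_0$ is not effective, $\eta$ has at least one pole.

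Next I will use that the sum of residues of $\eta$ is $0$. A meromorphic differential with a single simple pole would have nonzero total residue, which is impossible. Hence $\eta$ has simple poles at both $p'$ and $p''$, with residues $\rho$ and $-\rho$, $\rho\ne0$. This means the coefficient of $(\omega)_0-(f)_0$ is exactly $-1$ at each of $p',p''$, so neither $(\omega)_0-(f)_0+p'$ nor $(\omega)_0-(f)_0+p''$ is effective. That is the first assertion.

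For the orientation statement, take $p'$ real and a real local coordinate $t$ centred at $p'$. Write
\[
\omega=a\,t^k(1+O(t))\,dt,\qquad f=b\,t^{k+1}(1+O(t)),
\]
with $a,b$ real and nonzero, since $\omega$ and $f$ are real. Then $\operatorname{Res}_{p'}\eta=a/b$, and
\[
df=(k+1)b\,t^k(1+O(t))\,dt .
\]
Hence the real function $\omega/df$ extends continuously to $p'$ with the nonzero value $a/((k+1)b)$. Its sign is therefore $\operatorname{sign}(\operatorname{Res}_{p'}\eta)$ on a whole neighbourhood of $p'$ in $\R C$, and this sign says whether the orientations of $\R C$ defined by $\omega$ and $df$ agree there. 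The same computation at $p''$ gives the sign of $-\rho$. As $\rho$ is real and nonzero, the orientations agree near exactly one of the two points.

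The step needing the most care is the reality of $p',p''$, which the orientation statement needs. If $p'$ were non-real, then $\conj(p')$ would also be a pole of the real form $\eta$. By the pole count this forces $p''=\conj(p')$, and the orientation clause is then vacuous. I will note this, and in the real case check that the residues are real so that they really have opposite signs. The rest is the routine local computation above.
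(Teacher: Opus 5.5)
Your argument is correct, but it reaches the conclusion by a different route than the paper. The paper integrates $\omega$ over the system of paths $f^{-1}([-\varepsilon,\varepsilon])$ and uses Abel's theorem to say that the total integral is zero. It then compares orders in $\varepsilon$. The paths through zeros where $\omega$ vanishes to order at least that of $f$ contribute $o(\varepsilon)$. The paths through a point where $\omega$ vanishes to order exactly one less contribute a term of exact order $\varepsilon$, whose sign is the relative orientation of $\omega$ and $df$. So such terms must occur twice, with opposite signs.

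You replace this by the residue theorem for $\eta=\omega/f$. The two are the same identity at different levels. The contribution of the paths through a zero $q$ of $f$ is $2\varepsilon\operatorname{Res}_q\eta+o(\varepsilon)$, so the first-order part of the paper's identity is exactly $\sum_q\operatorname{Res}_q\eta=0$.

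Your version is exact and shorter in several ways.
\begin{itemize}
\item No asymptotic estimates are needed.
\item You avoid having to argue that the integral over the path system is exactly zero rather than a period; in the path picture this comes from continuity in $\varepsilon$.
\item The step from ``at least one pole'' to ``exactly two'' is just the fact that a differential cannot have a single simple pole.
\item Phrasing the orientation comparison as the sign of the continuous function $\omega/df$ keeps it meaningful at zeros of higher order. At a zero of odd order $k$, neither form orients $\R C$ across the point.
\item You also treat a non-real $p'$, which the paper tacitly excludes.
\end{itemize}

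The paper's formulation, in turn, is the geometric one it adapts from the setting of curves embedded in surfaces. It is also the Abel-type reasoning the authors invoke informally in Propositions~\ref{mob1} and~\ref{1mob}.
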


\noindent{\em Proof.}
The preimage $f^{-1}([-\varepsilon,\varepsilon])$, $\varepsilon>0$, defines a system of paths on $C$.
At the points of the preimage $f^{-1}(0)$ other than $p'$ and $p''$, the order of zero of $\omega$ is not less than the order of zero of $f$,
and hence the integral of the form $\omega$ over the corresponding components of the path system 
is $o(\varepsilon)$.
At one of the points $p'$, $p''$, or at both,
the order of zero of $\omega$ is one less than the order of zero of $f$; consequently, the integral of $\omega$
over the corresponding components of the path system is essential, i.e., of order $\varepsilon$. For the same reason,
at this point (at each of these points) the orders of zero
of the forms $\omega$ and $df$ are equal, and thus the orientations they define 
either coincide or are opposite.

By Abel's theorem, the integral of $\omega$ over the entire system of paths $f^{-1}([-\varepsilon,\varepsilon])$ is zero,
and hence there must be two essential integrals and their signs
in the total sum must be opposite. 
$\qed$\\

This statement is especially convenient to apply to mobile divisors of projective rank 1,
as shown by the following observation (Proposition~\ref{mob1}).
Starting from this observation, in this section it will not be important for us whether $C$ is separating or even a real curve.
Recall that a {\em mobile divisor} is an effective divisor $D$
whose linear system $|D|$ has no base points.

\newcommand{\Supp}{\operatorname{Supp}}

\begin{proposition}\label{mob1}
Let $D$ be an effective
divisor of projective rank 1 (i.e., $h^0(D)=2$) with no multiple points.
Then for any two distinct points $p',p''$ of the support of $D$ 
there exists a form $\omega$ vanishing on all of $D\setminus \{p',p''\}$, but not on all of $D$,
and such that $\omega(p')\ne 0$ and $\omega(p'')\ne 0$, if the divisor $D$ is mobile.

If $D$ is a real divisor, then the form $\omega$ can be chosen real.
\end{proposition}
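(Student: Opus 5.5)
Write $d=\deg D$ and work with the spaces of holomorphic forms $H^0(K-E)$ for effective $E\le D$. By Riemann--Roch, $h^0(D)=d-g+1+h^0(K-D)$, so $h^0(K-D)=g+1-d$. For an effective divisor $E$ we also have $h^0(K-E)=h^0(E)-\deg E+g-1$. The plan is to compare the dimensions of $H^0(K-D)$, $H^0(K-D+p')$, $H^0(K-D+p'')$ and $H^0(K-D+p'+p'')$. A form in the last space that lies in neither of the two middle spaces is exactly a form vanishing on $D\setminus\{p',p''\}$ with $\omega(p')\ne0$ and $\omega(p'')\ne0$. Such a form in particular does not vanish on all of $D$.

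The key step is to compute these dimensions from $h^0$ of the divisors $D-p'$, $D-p''$ and $D-p'-p''$. Mobility of $D$ means no point of $D$ is a base point of $|D|$, so $h^0(D-p)=h^0(D)-1=1$ for every $p\in\Supp D$. Riemann--Roch for $E=D-p'$ then gives
$$h^0(K-D+p')=1-(d-1)+g-1=g+1-d=h^0(K-D),$$
and likewise for $p''$. Hence every form vanishing on $D-p'$ automatically vanishes at $p'$, and similarly for $p''$.

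For $E=D-p'-p''$ we have $h^0(D-p'-p'')\ge 1$, since this divisor is effective and so constants are sections. Riemann--Roch gives
$$h^0(K-D+p'+p'')=h^0(D-p'-p'')-(d-2)+g-1\ge g+2-d=h^0(K-D)+1.$$
So $H^0(K-D+p'+p'')$ strictly contains $H^0(K-D+p')$ and $H^0(K-D+p'')$, both of which equal $H^0(K-D)$. Any $\omega$ in the larger space but outside $H^0(K-D)$ then does not vanish at $p'$, and it also does not vanish at $p''$. Otherwise it would lie in $H^0(K-D+p')$ (respectively $H^0(K-D+p'')$), which equals $H^0(K-D)$. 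Since $D$ has no multiple points, "vanishing on $D\setminus\{p',p''\}$" is precisely membership in $H^0(K-D+p'+p'')$, so no multiplicity bookkeeping is needed.

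For the real statement, if $D$ is real the spaces above are defined by real divisors. They are therefore invariant under the antilinear involution $\omega\mapsto\overline{\conj^*\omega}$, and so have bases of real forms. The strict inclusion $H^0(K-D)\subsetneq H^0(K-D+p'+p'')$ persists for the real subspaces, because their real dimensions are halves of the same complex dimensions. We pick a real $\omega$ in the larger real subspace outside the smaller one. The argument above then applies verbatim, since $H^0(K-D+p')=H^0(K-D)$ holds over $\C$ and hence also over the reals.

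The main obstacle is keeping the logic straight, in particular using base-point-freeness at exactly $p'$ and $p''$. Note also that the hypothesis $h^0(D)=2$ is used: it gives $h^0(D-p)=1$. The lower bound $h^0(D-p'-p'')\ge1$ needs only effectivity.
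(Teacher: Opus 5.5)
Your proof is correct and follows essentially the same argument as the paper. Both use the Riemann--Roch count $h^0(K-D+p'+p'')\ge g+2-d>g+1-d=h^0(K-D)$, mobility to get $h^0(K-D+p')=h^0(K-D)$ (and likewise for $p''$), and the reality of these subspaces for the real case; one small wording fix is that the real forms in each space have real dimension equal to, not half of, its complex dimension, which is all your argument needs.
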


\noindent{\em Proof.}
Set $D''=D-p'-p''$, $d=\deg D$, $d''=\deg D''$.
Note that $\dim\Omega^1(D)=h^0(K-D)$, where
$\Omega^1(D)$ is
the space of holomorphic forms vanishing on $D$. 
We need to prove that $\Omega^1(D'')\not\subset\Omega^1(D)$.
This fact follows from the Riemann--Roch theorem. Indeed,
since $h^0(D)=2$ and $h^0(D'')>0$, we have
\begin{equation}\label{eq.mob1}
\begin{split}
    h^0(K-D'') &= h^0(D'') + g - d'' - 1 = h^0(D'') + g - d + 1   \\ & > g - d + 1 = h^0(K-D) - h^0(D) + 2 = h^0(K-D).
\end{split}
\end{equation}
Hence $\dim\Omega^1(D'')>\dim\Omega^1(D)$, i.e.,
there exists a form $\omega\in\Omega^1(D'')\setminus\Omega^1(D)$.

Now let us prove that $\omega(p')=0$ implies $\omega(p'')=0$ if the divisor $D$ is mobile.
This can be derived either 
from Abel's theorem (as in the proof of Proposition \ref{opp-or}), or again
from the Riemann--Roch theorem. Indeed, set
$D'=D-p'$. From the mobility of $D$ it follows that $h^0(D')=h^0(D)-1$.
Therefore 
$h^0(K-D')=h^0(K-D)$ (cf.~\eqref{eq.mob1}), and since
$\Omega^1(D)\subset\Omega^1(D')$, we conclude that
$\Omega^1(D)=\Omega^1(D')$, i.e., $\omega(p'')=0$ implies $\omega(p')=0$ for $\omega\in\Omega^1(D'')$.

In the real case, the space $\Omega^1(D-p'-p'')$
and its subspace $\Omega^1(D)$ 
are real, and hence the form
$\omega$ can also be chosen real.
$\qed$\\

The converse statement also holds.

\begin{proposition}\label{1mob}
Suppose that $D$ is an effective mobile 
divisor without multiple points, such that for any two distinct points $p',p''\in\Supp(D)$ 
there exists a form $\omega$ vanishing on $D\setminus \{p',p''\}$, but not at $p'$.
Then the projective rank of the divisor $D$ is equal to one.
\end{proposition}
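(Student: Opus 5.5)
The plan is to translate the hypothesis on forms into a statement about base points of the linear systems $|D-p''|$ via Riemann--Roch, and then use mobility to pin down $h^0(D)$. Throughout, $d=\deg D$ and $\Omega^1(E)$ denotes the space of holomorphic forms vanishing on $E$, so that $\dim\Omega^1(E)=h^0(K-E)$, exactly as in the proof of Proposition~\ref{mob1}.

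\emph{Lower bound.} Since $D$ is effective, nonzero and mobile, $|D|$ has no base points, so it contains a divisor different from $D$. Hence $h^0(D)\ge 2$. It therefore remains to prove $h^0(D)\le 2$.

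\emph{Translating the hypothesis.} Fix distinct points $p',p''\in\Supp(D)$. The form $\omega$ from the hypothesis lies in $\Omega^1(D-p'-p'')$ but not in $\Omega^1(D-p'')$, since it does not vanish at $p'$. Thus
$$
h^0(K-D+p'+p'')>h^0(K-D+p'').
$$
Applying Riemann--Roch in the form $h^0(K-E)=h^0(E)-\deg E+g-1$ to $E=D-p'-p''$ and to $E=D-p''$, this inequality becomes
$$
h^0(D-p'-p'')+1>h^0(D-p'').
$$
Since always $h^0(D-p'-p'')\le h^0(D-p'')$, we get $h^0(D-p'-p'')=h^0(D-p'')$. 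In other words, $p'$ is a base point of $|D-p''|$.

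\emph{Iterating over the support.} Now fix $p''$ and let $p'$ range over all the other points of $\Supp(D)$. These points are distinct and each occurs with multiplicity one, because $D$ has no multiple points. Each of them is a base point of $|D-p''|$, i.e.\ every function in $L(D-p'')$ has no pole at $p'$.

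Removing such points one at a time does not change the space of functions, since $L(E-p')=L(E)$ whenever $p'$ is a base point of $|E|$ appearing with multiplicity one in $E$. After removing all of them we obtain
$$
L(D-p'')=L\bigl((D-p'')-(D-p'')\bigr)=L(0)=\C.
$$
Hence $h^0(D-p'')=1$.

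\emph{Using mobility.} Since $D$ is mobile, $p''$ is not a base point of $|D|$, so $h^0(D-p'')=h^0(D)-1$. Combined with the previous step this gives $h^0(D)=2$.

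\emph{Degenerate case.} If $\Supp(D)$ consists of a single point, the hypothesis is vacuous. In that case a mobile divisor of degree one forces $C\cong\PP^1$, and $h^0(D)=2$ holds directly.

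The only step needing care is the bookkeeping in the Riemann--Roch translation, namely the signs and degrees that turn the inequality of form spaces into the base-point statement. The rest is formal.
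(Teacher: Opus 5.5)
Your proof is correct, but it reaches the conclusion by a different tool than the paper. The paper argues by contradiction using Abel's theorem. If $h^0(D)\ge 3$, then $L(D-p')$ contains a nonconstant function $h$ (a ``deformation of $D$ fixing $p'$''), and $h$ must have a pole at some other point $p''\in\Supp(D)$. Take the form $\omega$ for this pair that is nonzero at $p''$. Then $h\omega$ has a single simple pole, at $p''$, with nonzero residue, which is impossible by Abel's theorem (equivalently, the residue theorem).

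Your Riemann--Roch count does the same job from the dual side. Turning $\omega\in\Omega^1(D-p'-p'')\setminus\Omega^1(D-p'')$ into $L(D-p''-p')=L(D-p'')$ says exactly that no function in $L(D-p'')$ can have a pole at $p'$. The paper obtains this fact from residues. (The paper itself remarks, in the proof of Proposition~\ref{mob1}, that these two tools are interchangeable.)

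What your route buys is a direct, fully explicit computation: first $h^0(D-p'')=1$, then mobility gives $h^0(D)=2$. It makes no appeal to ``points that move.'' It also shows that your separate lower-bound step and single-point case are unnecessary: the main argument already yields equality, and when $\Supp(D)=\{p''\}$ it reduces to $L(0)=\C$. The paper's route is shorter and matches the Abel-type reasoning of Propositions~\ref{opp-or} and~\ref{mob1}.

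Both arguments tacitly exclude $D=0$, for which the statement fails trivially. You make this explicit by assuming $D$ nonzero, and the paper does so by asserting that mobile divisors have positive rank.
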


\noindent{\em Proof.}
The projective rank of a mobile divisor is positive. If it is greater than one, then for any point $p'\in\Supp(D)$, there exists a deformation of the divisor $D$ leaving the point $p'$ fixed. If $p''\in\Supp(D)$ is a point that moves under this deformation, then the existence of a form $\omega$ from the hypothesis of the theorem contradicts Abel's theorem (cf. the proof of Proposition \ref{mob1}).
$\qed$


\section{Separating divisors and holomorphic forms}
%
Now let the curve $C$ be real and separating.
If $D$ is a totally real divisor on $C$ (i.e., $\Supp(D)\subset\R C$), and
$D$ is mobile, without multiple points, then, as noted in \cite{refOr25}, Proposition \ref{opp-or} sometimes gives both necessary
and sufficient conditions for the divisor $D$ to be separating.
It turns out that this observation is most conveniently applied when $D$ has projective rank 1.

Note that the linear system of such a divisor $D$ defines a real rational function $f$ such that
$D=(f)_0$, and 0 is a regular value of this function. 
This function is defined uniquely up to an automorphism of $\hR$.
The increase of the function $f$ defines an orientation in a neighborhood of the preimage $f^{-1}(0)$.

\if01{
\begin{proposition}\label{propKS} {\rm(cf.~\cite[Proposition 2.11]{refKS})}
Let $f:C\to\hC$ be a real rational function with a totally real divisor of zeros $(f)_0$ without multiple points.
The orientation of a neighborhood of the preimage $f^{-1}(0)$, given by the form $df$,
extends to an orientation on all of $\R C$ if and only if $f$ is totally real.
\end{proposition}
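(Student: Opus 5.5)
The plan is a degree count on one half of $C\setminus\R C$. I read ``extends to an orientation on all of $\R C$'' as ``coincides near $f^{-1}(0)$ with one of the two complex orientations of $\R C$''. This is the reading under which the statement is meaningful, as in \cite[Proposition 2.11]{refKS}. Let $C_+$ be one component of $C\setminus\R C$, with closure $\bar C_+$ and $\partial\bar C_+=\R C$ carrying the corresponding complex orientation. Let $H_\pm\subset\C$ denote the open upper and lower half-planes, and put $d=\deg f$. By hypothesis, $f^{-1}(0)$ consists of $d$ simple real points.

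\emph{Totally real $\Rightarrow$ orientation.} Since $f^{-1}(\hR)=\R C$, the connected set $f(C_+)$ misses $\hR$, so $f(C_+)$ lies in $H_+$ or in $H_-$. Replacing $C_+$ by $C_-$ if necessary, we get $f(C_+)\subset H_+$.

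Next, $f$ has no critical points on $\R C$. Near a critical point of order $k\ge2$, the preimage of $\hR$ consists of $k$ smooth arcs through the point, and these cannot all lie in the one-dimensional set $\R C$.

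Thus $f:\bar C_+\to \overline{H_+}$ is a local diffeomorphism along the boundary, sending interior to interior. Hence $f|_{\R C}$ preserves the boundary orientations, i.e.\ $df>0$ on $\R C$ for the complex orientation induced by $C_+$. In particular, near $f^{-1}(0)$ the $df$-orientation is the restriction of that complex orientation.

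\emph{Orientation $\Rightarrow$ totally real.} This is the main step. Choose the component $C_+$ whose boundary orientation agrees with the $df$-orientation at every zero of $f$. View $f|_{\bar C_+}$ as a $2$-chain in $\hC$ with boundary the $1$-cycle $f_*[\R C]$. Since $0$ is a regular value of $f|_{\R C}$ and every preimage counts with sign $+1$, we get $f_*[\R C]=d\,[\hR]$.

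Consequently, the local degree $N(w)$ of $f|_{C_+}$ over a point $w\notin\hR$ is constant on each of $H_+$ and $H_-$, and these constants differ by $d$. With the standard orientation conventions this reads $N_+-N_-=d$.

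Since $f$ is holomorphic, every preimage counts positively, so $0\le N_\pm\le d$. Therefore $N_+=d$ and $N_-=0$. As $f$ has degree $d$ on all of $C$, every point of $H_+$ has all its $d$ preimages in $C_+$. Applying $\conj$ gives $f^{-1}(H_-)\subset C_-$, because $f$ is real. Hence $f^{-1}(\C\setminus\hR)\cap\R C=\emptyset$ and $f^{-1}(\hR)\subset\R C$. The reverse inclusion $\R C\subset f^{-1}(\hR)$ holds automatically for a real function, so $f^{-1}(\hR)=\R C$.

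\emph{Expected obstacle.} The delicate step is making the chain argument rigorous, because poles of $f$ may lie on $\R C$. I would handle this by invoking the degree of the boundary map only at the regular value $0$. I would then pass to the values $w\in H_\pm$ by homological invariance in $\hC$ minus a point: the count $N(w)$ is locally constant off $f(\R C)=\hR$, and it jumps by the boundary multiplicity $d$ across $\hR$. Alternatively, one can precompose $f$ with a real Möbius map so that $\infty$ is a regular value, which makes the boundary a smooth covering away from finitely many points. Fixing the sign convention so that $N_+-N_-=+d$ rather than $-d$ just amounts to choosing $C_+$ versus $C_-$.
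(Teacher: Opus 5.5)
\textbf{Main gap: you narrowed the hypothesis.} In the statement, ``extends to an orientation on all of $\R C$'' means that \emph{some} orientation of $\R C$, not necessarily a complex one, agrees with the $df$-orientation near $f^{-1}(0)$. Equivalently, on each component $X_j$ of $\R C$, all zeros of $f$ on $X_j$ have the same local degree for $f|_{X_j}$.

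This condition is not vacuous: it fails as soon as two zeros on one component have opposite local degrees. So your claim that only the complex reading is meaningful is incorrect. The broad reading is also the intended one. The corollary right after this proposition deduces from it that the extended orientation is necessarily complex, and that corollary would be empty under your reading.

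Your $C_+$ argument does not cover the broad hypothesis. Let $d_j$ be the number of zeros on $X_j$. Suppose the extension agrees with the boundary orientation of $\bar C_+$ on some components and disagrees on others. Then $f_*[\partial\bar C_+]=\bigl(\sum_j\epsilon_jd_j\bigr)[\hR]$ with mixed signs $\epsilon_j=\pm1$, so $N_+-N_-=\sum_j\epsilon_jd_j$. This has absolute value $<d$, and the bounds $0\le N_\pm\le d$ then give no contradiction. Under your narrowed reading the argument is essentially fine. So is your proof of the direction ``totally real $\Rightarrow$ extends'', which even shows that the extension is complex.

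\textbf{What is missing.} You do not need a half of $C\setminus\R C$ at all; the paper's proof is a degree count on $\R C$ itself. With respect to the extended orientation, every zero on $X_j$ has local degree $+1$, so $\deg(f|_{X_j})=d_j$. Hence every regular value $y\in\hR$ of $f$ has at least $\sum_jd_j=d$ preimages on $\R C$. Since $\deg f=d$, these are all of $f^{-1}(y)$.

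A non-real point $z$ with $f(z)\in\hR$ is then excluded by openness. A small neighbourhood of $z$ in $C\setminus\R C$ maps onto a neighbourhood of $f(z)$, and that neighbourhood contains regular values in $\hR$. This route is shorter, avoids the chain-level issues with real poles, and proves the statement as given.

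\textbf{Minor omission.} The same openness step is missing at the end of your own argument. The inclusions $f^{-1}(H_+)\subset C_+$ and $f^{-1}(H_-)\subset C_-$ do not by themselves give $f^{-1}(\hR)\subset\R C$, since a point of $C_+$ might map into $\hR$. You must add that $f(C_+)$ is open and disjoint from $H_-$, hence contained in $H_+$. The inclusion $f^{-1}(\C\setminus\hR)\cap\R C=\emptyset$ that you write is automatic for a real function and is not what is needed.
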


\noindent{\em Proof.}
By definition, all $d=\deg f$ points of the totally real divisor are real. 
If the orientation extension condition is satisfied, then the degree of the map $f_{\R}:\R C\to\hR$ in the obtained
orientation of the curve $\R C$ is equal to $\pm d$. Thus, this is a totally real map.
Conversely, if the function $f$ is totally real,
then $df$ defines an orientation on $\R C$.
$\qed$

\begin{corollary}\label{corKS}
If the orientation of a neighborhood of the preimage $f^{-1}(0)$, defined by 
the form $df$, extends to an orientation on all of $\R C$, then it coincides 
with one of the complex orientations of the curve $\R C$.
\end{corollary}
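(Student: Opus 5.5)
The statement immediately above is Corollary~\ref{corKS}. I would deduce it directly from Proposition~\ref{propKS}, together with the remark in the introduction that the real restriction of a totally real function is an oriented covering for one of the complex orientations.

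\emph{Step 1: reduce to total reality.} Suppose the orientation of a neighborhood of $f^{-1}(0)$ given by $df$ extends to an orientation $o$ of all of $\R C$. By Proposition~\ref{propKS}, $f$ is totally real, so $f^{-1}(\hR)=\R C$. The divisor $(f)_0$ consists of $d=\deg f$ distinct real points, so $0$ is a regular value of $f|_{\R C}$.

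\emph{Step 2: show the extension is the one defined by $df$.} Since $f$ is totally real, $f|_{\R C}:\R C\to\hR$ is a $d$-sheeted covering. Hence $df$ has no zeros on $\R C$ away from poles, with the poles handled in the chart $1/f$, and so it defines a global orientation $o_f$ of $\R C$.

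On each component of $\R C$ that meets $f^{-1}(0)$, the orientations $o$ and $o_f$ agree near a point of $f^{-1}(0)$. Both are continuous orientations of a circle, so they agree on the whole component. Every component meets $f^{-1}(0)$, because $f|_{\R C}$ restricted to any component is a covering of $\hR$ and so is surjective. Therefore $o=o_f$.

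\emph{Step 3: identify $o_f$ with a complex orientation.} The set $f^{-1}(\C\setminus\hR)=C\setminus\R C$ splits into $f^{-1}(\text{upper half-plane})$ and $f^{-1}(\text{lower half-plane})$. Each is a union of components of $C\setminus\R C$, and they are exchanged by $\conj$. Since $C$ is separating, $C\setminus\R C$ has exactly two halves $C_\pm$, so after renaming $f^{-1}(\text{upper half-plane})=C_+$.

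Now $f$ is holomorphic, hence orientation preserving. It therefore maps $C_+$ to the upper half-plane preserving the complex orientation. So the boundary orientation of $\R C=\partial C_+$ is carried by $f$ to the boundary orientation of $\hR=\partial(\text{upper half-plane})$, which is the standard increasing orientation.

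It follows that $o_f$, the orientation in which $f$ increases, is the boundary orientation induced from $C_+$, which is a complex orientation. The main subtle point is this local check: a holomorphic map sending a half onto the upper half-plane matches boundary orientations. I would verify it in a local chart $f(z)=z$ near a regular real point.
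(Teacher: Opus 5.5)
Your proposal is correct and follows the paper's own argument: Proposition~\ref{propKS} gives total reality of $f$, and the $df$-orientation is then the boundary orientation induced from $f^{-1}$ of the upper half-plane, hence a complex orientation. You also supply two details the paper leaves implicit: that every component of $\R C$ meets $f^{-1}(0)$, so the extension is forced to be the global $df$-orientation, and that $f^{-1}$ of the upper half-plane is exactly one half of $C\setminus\R C$.
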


\noindent{\em Proof.}
 By Proposition \ref{propKS}, the function $f$ is totally real.
 Therefore, the orientation on $\R C$ defined by the form $df$ is a complex orientation,
 since it coincides with the boundary
 orientation induced from the preimage of the upper half-plane.
$\qed$\\

Putting the proven statements together, we obtain the following two facts.
}\fi

\begin{proposition}\label{or1}
Let $D$ be a separating divisor, 
and $p',p''\in\Supp(D)$ two points. 
Then any holomorphic form vanishing on all of $\Supp(D)\setminus\{p',p''\}$, 
but not at $p'$, defines local orientations at the points $p'$ and $p''$ that are not coherent 
with the complex orientation of $\R C$.
\end{proposition}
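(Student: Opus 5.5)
The plan is to reduce the statement directly to Proposition~\ref{opp-or}, applied to the totally real function $f$ with $D=(f)_0$. As in Proposition~\ref{opp-or}, I take $\omega$ to be a real holomorphic form, so that it defines an orientation of $\R C$ near any real point where it does not vanish.

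First I verify the hypotheses of Proposition~\ref{opp-or}. Since $D$ is separating, $D=(f)_0$ for a totally real $f$. A separating divisor has no multiple points, so every zero of $f$ is simple. By hypothesis $\omega$ vanishes at every point of $\Supp(D)\setminus\{p',p''\}$, so $(\omega)_0-(f)_0$ has nonnegative multiplicity there. At $p'$, on the other hand, $\omega(p')\neq 0$ while $f$ has a simple zero, so the multiplicity at $p'$ is $-1$. Hence $(\omega)_0-(f)_0$ is not effective. Adding $p'+p''$ raises the multiplicities at $p'$ and $p''$ by one each, which makes every coefficient nonnegative, so $(\omega)_0-(f)_0+p'+p''$ is effective.

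Proposition~\ref{opp-or} then yields two conclusions. First, $\omega(p'')\neq 0$ as well, so $\omega$ defines local orientations at both $p'$ and $p''$. Second, these orientations agree with the orientation given by $df$ near one of the two points and are opposite to it near the other.

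Next I identify the orientation defined by $df$. Because $f$ is totally real, $f|_{\R C}:\R C\to\hR$ is an unramified covering, so $df$ does not vanish on $\R C$. The orientation it induces is the pullback of the standard orientation of $\hR$. Since $f^{-1}(\hR)=\R C$, this pullback orientation is the boundary orientation of the half $f^{-1}(\{\operatorname{Im} z>0\})$ of $C\setminus\R C$, which is a complex orientation of $\R C$.

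It follows that $\omega$ agrees with this complex orientation at exactly one of the points $p',p''$. It also agrees with the opposite complex orientation at exactly one of them, since reversing the reference orientation swaps which point agrees. Consequently neither complex orientation is coherent with both local orientations of $\omega$, which is the claim.

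The only delicate point is the identification of the $df$-orientation with a complex orientation, that is, checking the boundary-orientation convention. This is the standard fact, recalled in the Introduction, that for a totally real $f$ the covering $f|_{\R C}$ is oriented with respect to a complex orientation. I would cite that fact rather than reprove it.
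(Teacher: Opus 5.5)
Your proposal is correct and follows the same route as the paper: apply Proposition~\ref{opp-or} to the totally real $f$ with $D=(f)_0$, then use that the orientation defined by $df$ is a complex orientation. Your explicit check of the effectivity hypotheses, and of $\omega(p'')\neq 0$, fills in details that the paper's two-line proof leaves implicit.
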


\noindent{\em Proof.}
Let $D=(f)_0$, where $f$ is a totally real function.
By Proposition \ref{opp-or}, the orientations given by the forms $\omega$ and $df$ are not coherent at the points $p'$ and $p''$.
Moreover, the orientation given by the form $df$ for a totally real function $f$ is a complex orientation.
$\qed$

\begin{proposition}\label{or2}
Let $D$ be a mobile totally real divisor 
without multiple points.
Suppose that for any distinct 
points $p',p''\in\Supp(D)$ there exists a holomorphic form $\omega_{p',p''}$,
vanishing on $\Supp(D)\setminus\{p',p''\}$, but not at $p'$, which 
defines local orientations at the points $p'$ and $p''$ not coherent with the complex 
orientation of $\R C$. Then the divisor $D$ is separating.
\end{proposition}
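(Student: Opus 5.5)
The plan is to produce a real rational function $f$ with $(f)_0=D$, and then use the forms $\omega_{p',p''}$ together with Proposition~\ref{opp-or} to show that $df$ induces at all points of $D$ local orientations coherent with one and the same complex orientation of $\R C$. A degree count then forces $f$ to be totally real.

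\emph{Step 1: construction of $f$.} Since $D$ is totally real it is a real divisor, so the space $L(D)$ is defined over $\R$. Mobility of $D$ means that for each $p\in\Supp(D)$ some element of $L(D)$ has a pole of full order at $p$; these are proper conditions. Since the real points of $L(D)$ are Zariski dense in it, a generic real $g\in L(D)$ has polar divisor exactly $D$. Put $f=1/g$. Then $f$ is real, $(f)_0=D$, and, since $D$ has no multiple points, every zero of $f$ is simple and real. Let $d=\deg D=\deg f$. If $d=1$, then $f$ is a real isomorphism $C\to\hC$ and the claim is trivial, so assume $d\ge2$.

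\emph{Step 2: comparing orientations.} Fix a complex orientation $o$ of $\R C$. For a point $p\in\Supp(D)$ let $\epsilon(p)=\pm1$ record whether the orientation given by $df$ near $p$ agrees with $o$. Take distinct $p',p''\in\Supp(D)$ and $\omega=\omega_{p',p''}$. The divisor $(\omega)_0-(f)_0$ is not effective, because $\omega(p')\ne0$. The divisor $(\omega)_0-(f)_0+p'+p''$ is effective, because $\omega$ vanishes on $D\setminus\{p',p''\}$ and all points of $D$ are simple. So Proposition~\ref{opp-or} applies. It gives $\omega(p'')\ne0$, and it says that $\omega$ and $df$ define the same orientation at exactly one of $p',p''$. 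By hypothesis, the $\omega$-orientations at $p'$ and $p''$ are not coherent with a complex orientation, meaning one agrees with $o$ and the other does not. Combining the two facts gives $\epsilon(p')=\epsilon(p'')$. Since the pair was arbitrary, $\epsilon$ is constant on $\Supp(D)$. Replacing $o$ by $-o$ if necessary, we may assume $df$ agrees with $o$ at every zero of $f$.

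\emph{Step 3: total reality.} Consider $f|_{\R C}:\R C\to\hR$ with $\R C$ oriented by $o$. Its degree, computed at the regular value $0$, is the signed count of zeros, which is $d$ since all signs are $+1$. For a regular value $y\in\hR$, the fiber $f^{-1}(y)\cap\R C$ therefore contains at least $d$ points. Since $\deg f=d$, the whole fiber $f^{-1}(y)$ is real. Hence $f^{-1}(\hR)\supset\R C$ is a union of closed real arcs whose complement is dense outside finitely many points, and by continuity and finiteness of critical values $f^{-1}(\hR)=\R C$. So $f$ is totally real and $D=(f)_0$ is separating.

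I expect the main obstacle to be the bookkeeping in Step 2: pinning down the meaning of ``not coherent'' consistently with the sign conventions of Proposition~\ref{opp-or}. The delicate part of Step 1 is checking that a \emph{real} generic element of $L(D)$ has poles exactly at $D$, and this is handled by Zariski density of the real points.
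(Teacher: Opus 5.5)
Your proof is correct and follows the paper's argument. Both realize $D=(f)_0$ by a real function, combine the hypothesis with Proposition~\ref{opp-or} to see that $df$ is coherent with one complex orientation at every point of $D$, and conclude from the degree $\pm\deg D$ that $f$ is totally real. You additionally supply details the paper leaves implicit: the existence of a real $f$ via a generic real element of $L(D)$, the check of the hypotheses of Proposition~\ref{opp-or}, and the passage from the degree count to $f^{-1}(\hR)=\R C$. That last step is most cleanly phrased through openness of $f$ near a putative non-real point of $f^{-1}(\hR)$.
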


\noindent{\em Proof.}
Since $D$ is mobile, $D=(f)_0$, where $f$ is a real rational function.
For any $p',p''\in D$, the orientation given by the form $\omega_{p',p''}$ is not coherent in neighborhoods of these points
with the complex orientation (by hypothesis), nor with the orientation given by the form $df$ (by Proposition \ref{opp-or}),
Consequently, the latter two orientations are coherent, and hence $f$, considered as a mapping between two oriented
(by the complex orientations) manifolds, has degree $\pm\deg(D)$.
Hence it is totally real. 
$\qed$\\

It is convenient to introduce the following notation.
Fix a complex orientation on $C$.
For a point  $p\in\R C$ and a real holomorphic form $\omega$ defined in a complex neighborhood of $p$, set
$\sign_\omega(p) = \sign\omega(v)$, where $v$ is a tangent vector to $\R C$ at the point $p$, positively
oriented with respect to the chosen complex orientation.


If $p$ is a simple zero of a real rational function $f:C\to\hC$, then
$\sign_{df}(p)$ is the local degree of $f|_{\R C}:\R C\to\hR$ at the point $p$.

\begin{proposition}\label{or3}
Let $f:C\to\hC$ be a real rational function with a totally real divisor of zeros $D=(f)_0$ 
without multiple points. The function $f$ is totally real if and only if
for any points $p',p''\in\Supp(D)$ one has
\begin{equation}\label{eq.or3}
                                  \sign_{df}(p')=\sign_{df}(p'').
\end{equation}
\end{proposition}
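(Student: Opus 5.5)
Both implications reduce to a topological count on $\R C$. Totally real means $f^{-1}(\hR)=\R C$. In that case $\R C$ is oriented by one complex orientation and $f|_{\R C}$ is an oriented covering, so every local degree has the same sign. For the converse, the plan is a degree count showing that $f|_{\R C}$ has degree $\pm\deg f$, which forces every preimage of a real point to be real.

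\emph{($\Rightarrow$).} Suppose $f$ is totally real. As recalled in the introduction, $f|_{\R C}:\R C\to\hR$ is then an oriented covering for the standard orientation of $\hR$ and one of the two complex orientations of $\R C$. Every local degree at a zero is therefore $+1$ for that orientation, or $-1$ for the opposite one. Since $\sign_{df}(p)$ is exactly this local degree, it is the same at all points $p\in\Supp(D)$, which is \eqref{eq.or3}.

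\emph{($\Leftarrow$).} Suppose \eqref{eq.or3} holds for all pairs. Set $d=\deg f=\deg D$. All $d$ zeros of $f$ are real and simple, so $0$ is a regular value of $f|_{\R C}$ with exactly $d$ preimages on $\R C$. Orient each component of $\R C$ by the fixed complex orientation and compute the degree of $f|_{\R C}:\R C\to\hR$ at the value $0$. It is the sum of the local degrees $\sign_{df}(p)$, $p\in\Supp(D)$, which all have one sign by \eqref{eq.or3}, so the degree is $\pm d$.

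Now take any $y\in\hR$ that is a regular value of $f|_{\R C}$. The number of real preimages of $y$ is at least the absolute value of the degree, namely $d$. But $f$ has only $d$ preimages of $y$ in all of $C$, counted with multiplicity. Hence all preimages of $y$ lie in $\R C$.

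For the finitely many critical values $y$, either invoke continuity, or argue as follows. If $f(q)=y$ with $q\notin\R C$, then $f$ is open near $q$, so nearby regular values $y'\in\hR$ have a non-real preimage near $q$, which contradicts the previous paragraph. Therefore $f^{-1}(\hR)=\R C$, i.e.\ $f$ is totally real.

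The only point needing care is that the degree is well defined. Each circle of $\R C$ carries the orientation induced from the chosen complex orientation, and the degree of $f|_{\R C}$ is additive over components, so counting at the value $0$ is legitimate. With the orientation fixed once, the identification $\sign_{df}(p)=$ local degree, stated before the proposition, handles this. No use of holomorphic forms is needed here. Those enter only when this criterion is combined with Propositions~\ref{opp-or} and~\ref{mob1}.
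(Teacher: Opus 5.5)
Your proof is correct and follows essentially the paper's argument, namely the degree-count proof of the Kummer--Shaw criterion from which Proposition~\ref{or3} is taken: equal signs give $\deg(f|_{\R C})=\pm d$, so every real value has all $d$ of its preimages on $\R C$, and conversely a totally real $f$ restricts to an oriented covering for a complex orientation. Your handling of the finitely many critical values via openness of $f$ spells out a step the paper leaves implicit.
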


To verify condition \eqref{eq.or3}, we shall 
use the following 
fact, which follows directly from Proposition \ref{opp-or}. 

\begin{proposition}\label{or4}
Let $f$ and $D$ be as in Proposition \ref{or3}.
Let $p',p''\in\Supp(D)$ and $\omega$ be a real holomorphic form on $C$,
vanishing everywhere on $f^{-1}(0)\setminus\{p',p''\}$, but not at $p'$. Then condition \eqref{eq.or3}
is equivalent to condition
\begin{equation}\label{eq.or4}
                                  \sign_\omega(p')\ne\sign_\omega(p'').
\end{equation}
\end{proposition}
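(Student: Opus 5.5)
The plan is to reduce the statement directly to Proposition~\ref{opp-or}. Assume $p'\ne p''$; for $p'=p''$ both conditions are read as trivially compatible, so there is nothing to prove. Since $D=(f)_0$ has no multiple points, every zero of $f$ is simple. Hence $\operatorname{ord}_q f=1$ for every $q\in f^{-1}(0)$.

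First I check the hypotheses of Proposition~\ref{opp-or}. The form $\omega$ vanishes at every point of $f^{-1}(0)\setminus\{p',p''\}$, so at those points $\operatorname{ord}_q\omega\ge 1=\operatorname{ord}_q f$. At $p'$ and $p''$ we have $\operatorname{ord}\omega\ge 0=\operatorname{ord} f-1$. Therefore $(\omega)_0-(f)_0+p'+p''$ is effective. Since $\omega(p')\ne0$, we get $\operatorname{ord}_{p'}\omega=0<1$, so $(\omega)_0-(f)_0$ is not effective.

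Proposition~\ref{opp-or} then gives two conclusions. First, $\omega(p'')\ne 0$ as well. Second, the orientations of $\R C$ defined by $\omega$ and by $df$ coincide near exactly one of the points $p',p''$ and are opposite near the other. Note that $df$ does not vanish at $p'$ or $p''$, because the zeros are simple. Since $\omega$ and $df$ are real forms that are nonzero at these points, $\sign_\omega$ and $\sign_{df}$ are well defined ($\pm1$) there.

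Next I translate the orientation statement into signs. Fix a complex orientation and let $v$ be a positively oriented tangent vector at a point $p\in\{p',p''\}$. The orientations given by $\omega$ and $df$ agree at $p$ exactly when $\sign_\omega(p)\,\sign_{df}(p)=+1$, and this does not depend on which complex orientation was fixed. Proposition~\ref{opp-or} thus says
$$
\sign_\omega(p')\,\sign_{df}(p') \;=\; -\,\sign_\omega(p'')\,\sign_{df}(p'').
$$
Rewriting this,
$$
\sign_{df}(p')\,\sign_{df}(p'') = -\sign_\omega(p')\,\sign_\omega(p'').
$$
So $\sign_{df}(p')=\sign_{df}(p'')$ if and only if $\sign_\omega(p')\ne\sign_\omega(p'')$, which is the claimed equivalence of \eqref{eq.or3} and \eqref{eq.or4}.

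The only delicate point is the bookkeeping: Proposition~\ref{opp-or} speaks of orders of zeros and of agreement of orientations rather than of signs. One must make sure that simplicity of the zeros gives exactly the non-effectivity and effectivity hypotheses. One must also check that "coinciding orientations" means that the product of the signs is $+1$. Everything else is immediate.
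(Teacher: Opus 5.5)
Your argument is correct and is exactly the reduction the paper has in mind when it says Proposition~\ref{or4} follows directly from Proposition~\ref{opp-or}. Simplicity of the zeros gives the effectivity hypotheses, and the identity $\sign_{df}(p')\sign_{df}(p'')=-\sign_\omega(p')\sign_\omega(p'')$ is the whole content.

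The only inaccuracy is your remark on $p'=p''$. There \eqref{eq.or3} holds and \eqref{eq.or4} fails, so the two conditions are not ``trivially compatible''. Rather, the hypothesis is vacuous: $\sum_{q\in f^{-1}(0)}\operatorname{Res}_q(\omega/f)=0$ rules out a form vanishing at all but one simple zero of $f$. So the statement should simply be read for distinct points.
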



\section{Dubling points on separating special divisors of projective rank 1}

\begin{lemma}\label{finite}
Let $f:C\to\hC$ be a rational function of degree $d\le g$ such that the divisor $D=(f)_0$ has projective rank 1.
Then for all but finitely many values $y\in\hC$, there exists a holomorphic form $\omega_y$ 
having first-order zeros at all points of $f^{-1}(y)$.
\end{lemma}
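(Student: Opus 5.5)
The plan is to exclude a finite set of values $y$, reduce the lemma to a linear-algebra statement about forms vanishing on the fibre $f^{-1}(y)$, and prove that statement by combining semicontinuity with Clifford's theorem.

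\textbf{Step 1 (generic fibres).} Discard the finitely many branch values of $f$. For every other $y\in\hC$ the fibre $D_y=(f)_y$ is a reduced divisor of degree $d$, linearly equivalent to $D$. Hence $h^0(D_y)=2$, and Riemann--Roch gives
$$h^0(K-D_y)=h^0(D_y)+g-d-1=g-d+1\ \ge 1 .$$
So the space $\Omega^1(D_y)$ of holomorphic forms vanishing on $D_y$ is nonzero and has constant dimension $g-d+1$. We need some $\omega\in\Omega^1(D_y)$ whose zero at each $p\in f^{-1}(y)$ has order exactly one. That is, $\omega$ must avoid the finitely many subspaces $\Omega^1(D_y+p)$, $p\in f^{-1}(y)$.

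\textbf{Step 2 (linear algebra).} A vector space over $\C$ or $\R$ is not a finite union of proper subspaces. So it suffices to show that for all but finitely many $y$, the subspace $\Omega^1(D_y+p)$ is proper in $\Omega^1(D_y)$ for every $p\in f^{-1}(y)$. In the real case the spaces are real, so $\omega$ can even be taken real.

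\textbf{Step 3 (the bad set is finite).} Call a point $p\in C$ bad if $\Omega^1(D_{f(p)}+p)=\Omega^1(D_{f(p)})$. Equivalently,
$$h^0\bigl(K-D_{f(p)}-p\bigr)\ge g-d+1 .$$
Since $D_{f(p)}+p$ varies in an algebraic family over $C$, upper semicontinuity of $h^0$ shows that the bad set is Zariski closed. It is therefore either finite or all of $C$. If it is finite, its image under $f$ is a finite set of values $y$, and these are the exceptional values to exclude.

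\textbf{Step 4 (ruling out ``all of $C$'').} This is the main step. Suppose every point is bad, and fix a generic $y$. Every point $p$ of $D_y$ is bad, and $D_{f(p)}=D_y$. Hence every form in $\Omega^1(D_y)$ vanishes to order at least $2$ at each point of $D_y$, i.e. $\Omega^1(2D_y)=\Omega^1(D_y)$. Then $h^0(K-2D_y)=g-d+1\ge1$, so $2D_y$ is special, and Riemann--Roch gives
$$h^0(2D_y)=2d-g+1+(g-d+1)=d+2 .$$
Clifford's theorem for the special divisor $2D_y$ of degree $2d$ gives $h^0(2D_y)-1\le d$, i.e. $h^0(2D_y)\le d+1$. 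This is a contradiction.

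The delicate point is justifying the semicontinuity/closedness in Step 3 carefully. Alternatively, one can check finiteness directly: the condition is given by the vanishing of suitable Wronskian-type minors, which are meromorphic functions on $C$.
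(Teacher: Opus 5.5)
Your proof is correct, but it follows a genuinely different and heavier route than the paper's.

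\textbf{The paper's route.} It gives a direct construction. Since $h^0(K-D)=g-d+1\ge 1$, fix a form $\omega_0$ vanishing on $D=(f)_0$ and write $(\omega_0)=E+(f)_0$ with $E\ge 0$. Because $(f)_y\sim(f)_0$, for every $y$ there is a holomorphic form $\omega_y$ whose divisor is exactly $E+(f)_y$; concretely, $\omega_0(f-y)/f$ for finite $y$. If $y$ is a regular value and $y\notin f(\Supp E)$, the zeros of $\omega_y$ on $f^{-1}(y)$ are simple. The exceptional set is thus written down explicitly: the branch values together with the finite set $f(\Supp E)$. No semicontinuity, no finite-union-of-subspaces lemma and no Clifford are needed.

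\textbf{Your route.} You argue by general position. You show that the ``bad'' locus is Zariski closed, hence finite or all of $C$, and you exclude the second case by Clifford's theorem. Your Riemann--Roch count $h^0(2D_y)=d+2>d+1$ is right, and the reduction from ``every point of $f^{-1}(y)$ is bad'' to $\Omega^1(2D_y)=\Omega^1(D_y)$ is valid because $D_y$ is reduced.

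\textbf{What each buys.} The paper's pencil of forms $\omega_0(f-y)/f$ makes Steps~3--4 of your argument unnecessary and identifies the bad values concretely. Your approach shows that a \emph{generic} element of $\Omega^1(D_y)$ works. This also follows from the paper's argument, since the good forms form a Zariski-open subset of $\Omega^1(D_y)$ and the paper exhibits one. Your Clifford step does prove something slightly more general: a reduced special divisor $D'$ with $h^0(D')\ge 2$ always has $\Omega^1(2D')\ne\Omega^1(D')$.
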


\noindent{\em Proof.}
By the Riemann--Roch theorem, $h^0(K-D)=g-d+1\ge 1$. 
Consequently, there exists a divisor $E$ such that $K_y=E+(f)_y$ belongs to the canonical class for all $y$.
Therefore, if $y$ is a regular value of the function $f$ and $y\not\in f(\Supp(E))$, then the holomorphic form $\omega_y$,
corresponding to the divisor $K_y$, has the required property.
Indeed, in this case the divisors $(f)_y$ and $E$ have no common points,
hence, at each point of $f^{-1}(y)$, the multiplicity of zero of the form $\omega_y$ 
is equal to the multiplicity of zero of the function $f$,
and since $y$ is a regular value, these multiplicities are equal to one.
$\qed$\\


\noindent{\em Proof of Theorem \ref{mainthm}.}
Let $f:C\to\PP^1$ be a totally real rational function of degree $g$ and projective rank 1, and 
$D=(f)_y$, $y\in\R$.
Since then $h^1(D)=h^0(K-D)=1$, there exists a unique (up to multiplication by a constant)
holomorphic form $\omega_D$ vanishing on $\Supp(D)$. 
Note that, since the divisor $D$ is mobile, the point $p$ is not a base point of it, hence $h^0(D-p)<h^0(D)$,
and thus $h^1(D-p)=1$, i.e., any form vanishing on $\Supp(D)\setminus\{p\}$ is also 
proportional to the form $\omega_D$.
If the arc $A$ is sufficiently small, then this form does not vanish at $p'$, and thus the divisor $D^+=D-p+p'+p''$
of degree $g+1$ is nonspecial and has projective rank 1, as does the divisor $D$.

Let us prove that the divisor $D^+$ is mobile.
Indeed, if $q$ is a base point of it, then $h^0(D^+-q)=h^0(D^+)=2$, hence the divisor $D^+-q$
is special, i.e., there exists a holomorphic form $\omega_q$ vanishing everywhere on $\Supp(D^+)\setminus\{q\}$.
If $q=p''$, then $\omega_q$ vanishes on $\Supp(D)\setminus\{p\}$, but then, as shown above, $\omega_q$
cannot vanish at the point $p'$. The case $q=p'$ is analogous. Suppose now that $q\not\in\{p',p''\}$.
If such a form $\omega_q$ exists for any points $p'$, $p''$ arbitrarily close to $p$, then, passing to the
limit, we obtain a holomorphic form which vanishes everywhere on $\Supp(D)\setminus\{q\}$ and has a multiple
zero at $p$. As shown above, such a form is proportional to the form $\omega_D$, and by Lemma~\ref{finite}
we can assume that $\omega_D$ has a first-order zero at $p$.
The obtained contradiction shows that the divisor $D^+$ is mobile.
Let $f^+$ be a real rational function such that $D^+=(f^+)_0$.

Since the vector space of forms vanishing on $\Supp(D)$ is defined over $\R$, we may assume 
that the form $\omega_D$ is real.
Let $q\in\Supp(D)\setminus\{p\}$.
Consider the real holomorphic form $\omega$ for the pair of points $q$ and $p$
in the divisor $D$, provided by Proposition \ref{mob1}.
Since $D$ is separating,
by Propositions \ref{or3} and \ref{or4} we have
\begin{equation}\label{eq1}
     \sign_\omega(p) \ne \sign_\omega(q).
\end{equation}
The point $p''$ is a zero of the form $\wt\omega=\omega_D+\varepsilon\omega$ for some $\varepsilon\in\R$. 
If the arc $A$ is sufficiently small, then $\varepsilon$ is also small, and then
\begin{equation}\label{eq2}
     \sign_{\wt\omega}(p') = \sign_{\wt\omega}(p) = \sign_\omega(p)
\end{equation}
(since $p$ is a simple zero of the form $\omega_D$) and
\begin{equation}\label{eq3}
     \sign_{\wt\omega}(q) = \sign_{\omega}(q).
\end{equation}
From \eqref{eq1}--\eqref{eq3} it follows that 
$$
   \sign_{\wt\omega}(p')=\sign_\omega(p)\ne\sign_\omega(q)=\sign_{\wt\omega}(q).
$$
The form $\wt\omega$ vanishes on $\Supp(D^+)\setminus\{p',q\}$, but not at $q$.
Consequently, $\sign_{df^+}(p')=\sign_{df^+}(q)$ by Proposition \ref{or4}.
Similarly, $\sign_{df^+}(p'')=\sign_{df^+}(q)$.
Since $q$ is an arbitrary point of $\Supp(D^+)\setminus\{p',p''\}$, we conclude that the function $\sign_{df^+}$
is constant on $\Supp(D^+)$, and Theorem~\ref{mainthm} follows from Proposition \ref{or3}.
$\qed$


\section{Some examples}

\subsection{Hyperbolic quartic in $\PP^2$}

A real curve $C$ of degree $m$ in $\PP^2$ is called {\em hyperbolic},
if its linear projection onto $\PP^1$ from some point $p\in\rp^2\setminus\R C$
is a separating morphism of degree $m$. If $C$ is non-singular and $m=2k$, this is equivalent to the fact that
$\R C=X_1\cup \dots\cup X_k$, where the $X_j$ are embedded circles (ovals) in $\rp^2$ such that
$X_j$ for $j>1$ bounds a disk in $\rp^2$ containing $X_{j-1}$.

For a plane hyperbolic quartic $C$, it is shown in \cite[Example 3.8]{refKS} that the element
$(1,2)\in\Sep(C)$ is extendable.%
\footnote{ This result is stated in \cite{refKS} for a concrete curve but the proof is valid for
any plane hyperbolic quartic.}
The elements $(1,2)$ and $(2,2)$ are realizable by the linear projection from a point chosen on the inner oval
or inside it (i.e., in the disk it bounds). The elements $(3,2)$ and $(1,3)$ are realized in \cite{refKS}
by pencils of conics, and their extendibility follows from the extendibility of elements realizable by
non-special divisors proved in \cite[Lemma 3.2]{refKS}.
The realizability of $(1,2)$ and $(2,2)$ combined with the extendibility of $(3,2)$ of $(1,3)$
implies the extendibility of $(1,2)$.

Theorem \ref{mainthm} of the present paper directly implies the extendibility of $(1,2)$.


\subsection{Other examples}

All elements of degree $g$ of the separating semigroups computed in
\cite{refKS,refMO,refOr19,refOr25} are extendable and, almost for all of them, this fact can be proved
(and in \cite{refMO} is proved) by applying Theorem~\ref{mainthm} after a non-difficult checking that
the rank of the corresponding divisor of degree $g$ is equal to $1$. The only exclusion is the element
$g$ of the semigroup
$$
     \Sep(C)=(2\mathbb{N}) \cup \big(g+\mathbb{N}_0\big)
$$
for a non-maximal separating hyperelliptic curve $C$ of even genus $g\ge 4$.
Any separating divisor $D$ of degree $g$ on such a curve has the form $\pi^*(E)$,
where $\pi:C\to\PP^1$ is the hyperelliptic projection and $E$ is a totally real divisor of degree
$g/2$ on $\PP^1$ without multiple points. The projective rank of such a divisor is $g/2$, hence Theorem~\ref{mainthm}
does not apply to it for $g>2$. Nevertheless, the corresponding element of the separating semigroup is extendable,
and can be constructed by dubling a point (cf.~\S\ref{sect.doubling}) but with a specially chosen 
simultaneous perturbation of the other points.


\medskip\noindent
{\bf Remark.} We say that an extendable element $a \in\Sep(C)$ is {\em extremal} if it cannot be presented as
$b+c$, where $b \in\Sep(C)$ extendable, $b\ne a$, and $c \in\mathbb N_0^r$.
It is interesting that all the extremal extendable elements of the semigroups classified in
\cite{refKS,refMO,refOr19,refOr25} are of degree $g$ or $g+1$,
and the degree $g + 1$ appears only for $M$-curves and for hyperelliptic curves of odd degree.


\bigskip
Universit\'e de Gen\`eve, Math\'ematiques, rue du Conseil-G\'en\'eral 7--9,
1205 Gen\`eve, Suisse (G.M.)

Email address: grigory.mikhalkin@unige.ch

\bigskip
Steklov Mathematical Institute, ul. Gubkina 8, Moscow, Russia (S.O.)

IMT, Universit\'e de Toulouse, 118 route de Narbonne, 31062, Toulouse, France (S.O.)

Email address: orevkov@math.ups-tlse.fr

\end{document}